\def\spose#1{\hbox to 0pt{#1\hss}}
\def\text #1{\hbox{\quad#1\quad}}
\def\nthinsp{\mskip -2   mu}
\def\superstar{^{\raise 0.5pt\hbox{$\nthinsp *$}}}
\def\SUPERSTAR{^{\raise 0.5pt\hbox{$*$}}}
\def\lamstarT {\lambda^{\raise 0.5pt\hbox{$\nthinsp *$}T}}
\def\hbar{\skew{4.2}\bar h}
		\def\bkE{{\rm I\kern-.17em E}}
		\def\bk1{{\rm 1\kern-.17em l}}
		\def\bkD{{\rm I\kern-.17em D}}
		\def\bkR{{\rm I\kern-.17em R}}
		\def\bkP{{\rm I\kern-.17em P}}
		\def\bkY{{\bf \kern-.17em Y}}
		\def\bkZ{{\bf \kern-.17em Z}}
		\def\beq{\begin{eqnarray}}
		\def\bc{\begin{center}}
		\def\be{\begin{enumerate}}
		\def\bi{\begin{itemize}}
		\def\bs{\begin{small}}
		\def\bS{\begin{slide}}
		\def\ec{\end{center}}
		\def\ee{\end{enumerate}}
		\def\ei{\end{itemize}}
		\def\es{\end{small}}
		\def\eS{\end{slide}}
		\def\eeq{\end{eqnarray}}
	\def\cp2problem#1#2#3#4{\fbox
		 {\begin{tabular*}{0.9\textwidth}
			{@{}l@{\extracolsep{\fill}}l@{\extracolsep{6pt}}l@{\extracolsep{\fill}}c@{}}
				#1 & & $#4 $ 
			\end{tabular*}}}
		\renewcommand{\emph}[1]{\textbf{#1}}
		\def\bkE{{\rm I\kern-.17em E}}
		\def\bk1{{\rm 1\kern-.17em l}}
		\def\bkD{{\rm I\kern-.17em D}}
		\def\bkR{\mathbb{R}}
		\def\bkP{{\rm I\kern-.17em P}}
		\def\bkZ{{\bf{Z}}}
\newcommand {\beeq}[1]{\begin{equation}\label{#1}}
\newcommand {\eeeq}{\end{equation}}
\newcommand {\bea}{\begin{eqnarray}}
\newcommand {\eea}{\end{eqnarray}}
\def\texitem#1{\par\smallskip\noindent\hangindent 25pt
               \hbox to 25pt {\hss #1 ~}\ignorespaces}
\newtheorem{algorithm}{Algorithm}
\newtheorem{theorem}{Theorem}
\newtheorem{remark}{Remark}
\newtheorem{corollary}{Corollary}
\newtheorem{lemma}{Lemma}
\newtheorem{proposition}{Proposition}
\newtheorem{assumption}{Assumption}
\newcommand\barbelow[1]{\stackunder[1.2pt]{$#1$}{\rule{1.5ex}{.075ex}}}
\newcommand{\aj}[1]{{\color{black}#1}}
\newcommand{\bmo}[1]{{\color{black}#1}}
\newcommand{\ze}[1]{{\color{black}#1}}
\newcommand{\zal}[1]{{\color{black}#1}}
\begin{document}
\allowdisplaybreaks
\title{An Inexact Variance-Reduced Method For Stochastic Quasi-Variational
Inequality Problems With An Application In Healthcare}

\author{Zeinab Alizadeh \footnote{ Systems and Industrial Engineering, University of Arizona,
 Tucson, AZ 85721, USA.} \and Brianna M. Otero \footnotemark[1] \and Afrooz Jalilzadeh \footnotemark[1]
}

\date{}

\maketitle

\begin{abstract}
This paper is focused on a stochastic quasi-variational inequality (SQVI) problem with a continuous and strongly-monotone mapping over a closed and convex set where the projection onto the constraint set may not be easy to compute. We present an inexact variance reduced stochastic scheme to solve SQVI problems and analyzed its convergence rate and oracle complexity. A linear rate of convergence is obtained by progressively increasing sample-size and approximating the projection operator. Moreover, we show how a competition among blood donation organizations can be modeled as an SQVI and we provide some preliminary simulation results to validate our findings.
\end{abstract}
\section{INTRODUCTION}
\label{sec:intro}
Variational inequality (VI) problems have a broad range of applications in convex Nash games, traffic equilibrium problems, economic equilibrium problems, amongst others \cite{facchinei2007finite}. Stochastic VI (SVI) has been proposed in order to describe decision making problems which involve uncertainty. Such an \zal{uncertainty} commonly arises in simulation optimization and stochastic economic equilibrium involving expectations \cite{gurkan1996sample}. In this paper, we study a stochastic quasi-VI (SQVI) problem which is an extension of SVI when the convex sets where the solutions are to be found depend on the solutions themselves. Let $X$ be a finite-dimensional real vector space. Consider the following SQVI problem: find $x\in K(x)$ such that
\begin{align}\label{sqvi}
\langle F(x), y-x \rangle\geq 0, \quad \forall y\in K(x),
\end{align}
where $K:X\to 2^X$ is a set-valued mapping with non-empty bounded closed convex values $K(x)\subseteq X$ for all $x\in X$, $F(x)\triangleq \mathbb E[G(x,\xi)]$, $\xi: \Omega \to \mathbb R^d$, ${G}: X \times \mathbb R^d  \rightarrow
\mathbb{R}^n$, and the associated probability space is denoted by $(\Omega, {\cal F}, \mathbb{P})$. If $K(x)=K$, then problem \eqref{sqvi} turns into a conventional SVI problem. 

While deterministic VIs \cite{malitsky2015projected} and SVIs \cite{jalilzadeh2019proximal} have received significant study over the last several decades, less is known regarding SQVIs. In the deterministic regime, there have been several studies about numerical methods to solve QVIs \cite{pang2005quasi,antipin2013second,facchinei2014solving,mijajlovic2015proximal,noor2000new,noor2007existence,ryazantseva2007first,salahuddin2004projection}. Recently, a linear convergence rate for strongly monotone QVI problem has been obtained by \cite{mijajlovic2019gradient} (see also \cite{nesterov2006solving}). However, there are no available rate results for SQVI problems to the best of our knowledge. Moreover, in many scenarios, computing the projection onto the constraint set may be expensive or may not have an analytic solution. In this work, we propose a variance-reduced stochastic scheme to solve problem \eqref{sqvi} with convergence guarantee by computing the projection inexactly at each iteration. Next, to show the need to model problems as SQVI, instead of VI, we illustrate a real-world problem arising in healthcare in which projecting onto the constraint might also be challenging and we discuss the existing gaps and the main contributions of this paper. 

\subsection{Applications and Existing Gaps}\label{sec:blood} 
Recently QVI problems have emerged in many application domains including communication networks, wireless sensor networks, power control \cite{stupia2015power,tang2018securing} and healthcare \cite{nagurney2017supply,nagurney2019competition}. To further motivate our
research goals, we discuss one problem that \bmo{arises} in healthcare in details which can be formulated
as an SQVI problem.

{\bf Blood Donation Problem.} Inspired by \cite{nagurney2019competition},  {we consider the competition for blood donations among blood service organizations}, where each organization intends to maximize their transaction utilities and compete on the quality of service that they provide in different regions. Suppose we have $n$ organizations providing service in $m$ different locations. Each organization has the quality of service as its strategic variable. We denote the level of service quality of organization $i$ in location $j$ by $Q_{ij}$ and we group the level of service quality for all blood service organizations into matrix $Q\in \mathbb R^{n\times m}$. We assume there is an upper bound and lower bound for the quality level that each organization can provide, and $K_i$ denotes the feasible set of organization $i$, hence we denote the feasible set of all players in the game by $K=\prod_{i=1}^n K_i$.  
Each organization seeks to maximize its transaction utilities, $U_i$, and Nash equilibrium is established if no blood service organization can improve upon its transaction utility by altering its quality service levels, given that the other organizations have decided on their quality service levels.
The associated VI formulation for this Nash Equilibrium problem can be characterized by finding a quality service level pattern $Q^*\in K$ such that the following holds: 
\begin{align}\label{vi1}-\sum_{i=1}^n\sum_{j=1}^m \tfrac{\partial U_i(Q^*)}{\partial Q_{ij}}\times (Q_{ij}-Q_{ij}^*)\geq 0, \quad \forall Q\in K.\end{align}

{\bf Gaps.} There are two main gaps in the above formula presented by \cite{nagurney2019competition}.
\begin{itemize}
\item[(i)] The total volume of blood donations in one location should be bounded from below to make sure we meet the demand on that location, i.e., $\sum_i{P_{ij}(Q)\geq P_j}$.
Therefore, the feasible set of organization $i$ will depend on {the strategy of other players} and problem \eqref{vi1} will change to a QVI problem. Moreover, depending on  $P_{ij}(\cdot)$, the projection onto such a constraint set may not be easy to compute.
\item[(ii)] The stochasticity of the parameters is ignored. For instance, the cost of collecting blood is an uncertain parameter, hence, the transaction utility $U_i$ is stochastic.
\end{itemize}

As continually emphasized in the literature \cite{nagurney2019competition,nagurney2017supply}, the main reason why these types of competitions are always formulated simply as VI and the dependency of player $i$'s strategy on other players' strategies is ignored, is that solving a QVI/SQVI problem is more complicated and challenging than solving the VI/SVI counterpart. There is no efficient method for solving such problems. 

\subsection{Contributions} To fill the aforementioned gaps, in this paper, we consider strongly monotone SQVI problems. We develop and analyze an inexact variance-reduced stochastic scheme (Inexact-VR-SQVI). In particular, we investigate the convergence rate of the proposed method under the conditions where the projection onto the constraint may or may not be easy to compute. In the latter scenario, at each iteration of the proposed method, the projection step is solved inexactly and the effect of the underlying error on the convergence rate is characterized. More importantly, by improving the accuracy of such approximation at an appropriate rate combined with a variance reduction technique, we demonstrate a linear convergence rate that matches the best-known rate result in the deterministic counterpart. We also show that achieving an $\epsilon$-solution, i.e., $\mathbb E[\|x_k-x^*\|]\leq \epsilon$, requires $\mathcal O(1/\epsilon^2)$ sample operators. To the best of our knowledge, this is the first convergence rate result for SQVI problems.

Next, we state the main assumptions that are needed for the convergence analysis. In Section \ref{sec:conv}, we introduce the Inexact-VR-SQVI algorithm and in Section \ref{sec:numerical}, we show the performance of the proposed scheme by implementing it on the blood donation problem that is modeled as SQVI. Finally, Section \ref{sec:conclude} presents our main conclusions and future work.

\subsection{Assumptions}
In this section, first we define important notations and then the main assumptions that we need for the convergence analysis are stated.

{\bf Notations.}  Throughout the paper, $\|x\|$ denotes the Euclidean vector norm, i.e., $\|x\|=\sqrt{x^Tx}$.  $\mathbf{P}_X [u]$ is the projection of $u$ onto the set $X$, i.e. $\mathbf{P}_X [u] = argmin_{z \in X} \| z-u \|$. $\mathbb E[x]$ is used to denote the expectation of a random variable $x$.
\begin{assumption}\label{asump1}
Assume that operator $F:X\rightarrow \mathbb R^n$ is $\mu$-strongly monotone
\begin{align*}
\langle F(x)-F(y),x-y\rangle\geq \mu \|x-y\|^2, \quad \forall x,y\in X,
\end{align*} 
and $L$-Lipschitz continuous on $X$
\begin{align*}
\|F(x)-F(y)\|\leq L\|x-y\|,\quad \forall x, y \in X.
\end{align*}

\end{assumption}
If $\mathcal F_{k}$ denotes the information history at epoch $k$, then we have the following requirements on the associated filtrations where $\bar w_{k,N_k} \triangleq \tfrac{1}{N_k}{\sum_{j=1}^{N_k} \zal{( G(x_k,\xi_{j,k})-F(x_k))}}$.
\begin{assumption}\label{assump_error}
 There exists $\nu>0$ such that $\mathbb E[\bar w_{k,N_{k}}\mid  \mathcal F_{k}]=0$ and $\mathbb E[\| \bar w_{k,N_{k}}\|^2\mid  \mathcal F_{k}]\leq \tfrac{\nu^2}{N_{k}}$  holds almost surely 
 for all $k$, 
where $\mathcal{F}_k \triangleq \sigma\{x_0, x_1, \hdots, x_{k-1}\}$.  
\end{assumption} 
In our analysis, it is assumed that an inexact solution of the projection operator exists through an inner algorithm $\mathcal A$ satisfying the following assumption. Later, in section \ref{sec:inner}, instances of algorithms satisfying this assumption {are discussed}. 
\begin{assumption}\label{assump:inner}
There is an iterative method $\mathcal A$ that satisfies the following property:
For any $x\in\mathbb R^n$, any closed and convex set $K\subseteq \mathbb R^n$, and an initial point $u_0$, $\mathcal A$ can generate an output $u\in \mathbb R^n$ such that $\|u-\tilde u\|^2\leq C/t^2$ for some $C>0$ satisfying $\tilde u=\mbox{argmin}_{y\in K}\{\tfrac{1}{2}\|y-x\|^2\}$. 
\end{assumption}
\section{CONVERGENCE ANALYSIS}\label{sec:conv}
In our analysis, the following technical lemma for projection mappings {is used}.
\begin{lemma}\label{lem1}\cite{bertsekas2003convex}
\noindent Let $ X\subseteq \mathbb{R}^n $  be a nonempty closed and convex set. Then the following hold:
(a) $\|\mathbf{P}_X [u]- \mathbf{P}_X [v]\| \leq \|u-v\| $ for all $ u,v \in \mathbb{R}^n$;
(b) $ (\mathbf{P}_X [u]-u)^T(x-\mathbf{P} _X [u]) \geq 0 $ for all $u \in \mathbb{R}^n$ and $x \in X$.  
\end{lemma}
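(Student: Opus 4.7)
The plan is to prove (b) first from the optimality conditions for the defining convex program of the projection, and then use (b) twice to derive the non-expansiveness property (a) via a standard symmetrization argument.

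First I would establish (b). By definition, $\mathbf{P}_X[u] = \mathrm{argmin}_{z \in X} \tfrac{1}{2}\|z - u\|^2$. Since $X$ is nonempty, closed, and convex, and $z \mapsto \tfrac{1}{2}\|z-u\|^2$ is strongly convex and coercive, the minimizer exists and is unique. The first-order optimality condition for minimizing a differentiable convex function over a convex set says that at the optimum $\mathbf{P}_X[u]$, the gradient $\nabla_z \tfrac{1}{2}\|z - u\|^2 \bigr|_{z = \mathbf{P}_X[u]} = \mathbf{P}_X[u] - u$ must make a nonnegative inner product with every feasible direction $x - \mathbf{P}_X[u]$, $x \in X$. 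This yields exactly
\begin{equation*}
(\mathbf{P}_X[u] - u)^T (x - \mathbf{P}_X[u]) \geq 0 \quad \forall x \in X,
\end{equation*}
which is (b).

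Next I would derive (a) from (b). Fix $u, v \in \mathbb{R}^n$ and write $p_u = \mathbf{P}_X[u]$ and $p_v = \mathbf{P}_X[v]$. Apply (b) to $u$ with $x = p_v \in X$, and separately to $v$ with $x = p_u \in X$:
\begin{align*}
(p_u - u)^T(p_v - p_u) &\geq 0, \\
(p_v - v)^T(p_u - p_v) &\geq 0.
\end{align*}
Adding these two inequalities and grouping the terms multiplying $p_v - p_u$ gives
\begin{equation*}
(p_v - p_u)^T\bigl[(v - u) - (p_v - p_u)\bigr] \geq 0,
\end{equation*}
hence $\|p_v - p_u\|^2 \leq (p_v - p_u)^T(v - u)$. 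Applying Cauchy--Schwarz to the right-hand side and dividing by $\|p_v - p_u\|$ (the case $p_u = p_v$ being trivial) produces $\|p_v - p_u\| \leq \|v - u\|$, which is (a).

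Since this is a classical result, I do not anticipate a genuine obstacle; the only subtle point is justifying existence and uniqueness of the projection before invoking first-order optimality, which relies on closedness and convexity of $X$ together with strong convexity of the squared-distance objective. Everything else is mechanical manipulation of the two instances of inequality (b).
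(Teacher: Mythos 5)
Your proof is correct and is the standard argument: the paper itself does not prove this lemma but cites it from Bertsekas, and your derivation (first-order optimality for (b), then two applications of (b) plus Cauchy--Schwarz for (a)) is precisely the classical textbook proof of that cited result.
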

The main difference between VIs and QVIs is in the existence of the solution. It is well-known that if operator $F$ is strongly monotone and Lipschitz continuous on a closed and convex set, then corresponding VI (and SVI) has a unique solution \cite{nesterov2006solving,jalilzadeh2019proximal}. However, these conditions are not sufficient for the existence of the QVI solutions. In the following proposition, we state the requirements needed in our analysis in order to prove the existence of a solution for QVIs (and similarly for SQVIs). 

\begin{proposition}\label{t1}\cite{noor1994general}
Suppose Assumption \ref{asump1} holds and there exists $\gamma>0$ such that $\|\mathbf{P}_{K(x)} [u]- \mathbf{P}_{K(y)} [u]\| \leq \gamma \|x-y\| $ for all $ x,y,u \in X$ and $\gamma+\sqrt{1-\mu^2/L^2}<1$. Then, problem \eqref{sqvi} has a unique solution. 
\end{proposition}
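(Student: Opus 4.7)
The plan is to recast the SQVI as a fixed-point equation and invoke the Banach contraction principle. The key observation is that $x^*$ solves \eqref{sqvi} if and only if $x^* = \mathbf{P}_{K(x^*)}[x^* - \alpha F(x^*)]$ for any $\alpha>0$. This follows from Lemma~\ref{lem1}(b): the inequality $\langle F(x^*), y - x^*\rangle \geq 0$ for all $y\in K(x^*)$ is equivalent to the variational characterization of the projection of $x^* - \alpha F(x^*)$ onto $K(x^*)$. So I define
\begin{equation*}
T(x) \triangleq \mathbf{P}_{K(x)}[x - \alpha F(x)]
\end{equation*}
and aim to show $T$ is a contraction on $X$ for a suitable choice of $\alpha$.

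For any $x,y \in X$, I would split the difference using the triangle inequality by inserting $\mathbf{P}_{K(x)}[y - \alpha F(y)]$:
\begin{align*}
\|T(x) - T(y)\| &\leq \|\mathbf{P}_{K(x)}[x - \alpha F(x)] - \mathbf{P}_{K(x)}[y - \alpha F(y)]\| \\
&\quad + \|\mathbf{P}_{K(x)}[y - \alpha F(y)] - \mathbf{P}_{K(y)}[y - \alpha F(y)]\|.
\end{align*}
The first term is bounded by $\|(x-\alpha F(x)) - (y-\alpha F(y))\|$ via the nonexpansiveness of projection onto the fixed set $K(x)$ (Lemma~\ref{lem1}(a)). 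The second term is bounded by $\gamma\|x-y\|$ by the hypothesis of the proposition.

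For the first term, I would expand the square and use Assumption~\ref{asump1}:
\begin{equation*}
\|(x-\alpha F(x)) - (y-\alpha F(y))\|^2 \leq (1 - 2\alpha\mu + \alpha^2 L^2)\|x-y\|^2.
\end{equation*}
Choosing $\alpha = \mu/L^2$ minimizes this bound and gives the factor $\sqrt{1 - \mu^2/L^2}$. Combining the two estimates yields
\begin{equation*}
\|T(x) - T(y)\| \leq \left(\sqrt{1 - \mu^2/L^2} + \gamma\right)\|x - y\|,
\end{equation*}
and the hypothesis $\gamma + \sqrt{1 - \mu^2/L^2} < 1$ ensures $T$ is a strict contraction. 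Completeness of $X$ together with the Banach fixed-point theorem then provides a unique fixed point $x^*$, which by the initial equivalence is the unique solution of \eqref{sqvi}.

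The only slightly delicate step is justifying the equivalence between the SQVI and the fixed-point equation of $T$ on the set-valued map $K(\cdot)$; once that is in hand, the contraction computation is routine. No step should pose a serious obstacle since everything reduces to standard projection arithmetic combined with the Lipschitz-in-$x$ condition on $\mathbf{P}_{K(x)}[\cdot]$ supplied in the hypothesis.
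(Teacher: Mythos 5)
Your proof is correct and follows essentially the same route as the classical argument the paper cites from \cite{noor1994general}: the fixed-point reformulation $x^*=\mathbf{P}_{K(x^*)}[x^*-\alpha F(x^*)]$, the split of the projection difference into a same-set term (bounded by nonexpansiveness and the strong-monotonicity/Lipschitz estimate $\sqrt{1-2\alpha\mu+\alpha^2L^2}$) and a varying-set term (bounded by $\gamma\|x-y\|$), and Banach's contraction principle with $\alpha=\mu/L^2$. This is also exactly the decomposition the paper reuses in the proof of Theorem \ref{rate}, where the stepsize condition on $\eta$ is just the requirement that $\gamma+\sqrt{1+L^2\eta^2-2\eta\mu}<1$.
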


More discussion on the existence of a solution to an SQVI problem can be found in \cite{ravat2017existence}. In Algorithm \ref{alg1}, a variance-reduced stochastic scheme for solving SQVI problem \eqref{sqvi} {is presented}. In particular, at each iteration, a step along the negative direction of the sample-average operator $G(\cdot,\xi)$, with step size $\eta$ is taken following by computing an inexact solution, $y_k$, of the projection onto the set $K(x_k)$ using Algorithm $\mathcal A$. The next iterate point is calculated based on a carefully selected convex combination of the previous iterates, $x_k$, and $y_k$.
In our analysis, $e_k$ denotes the error of computing the projection operator, i.e., for any $k\geq 0$
$    e_k\triangleq u_k-\mathbf{P}_{K(x_k)}\left[x_k-\eta\frac{\sum_{j=1}^{N_k}G(x_k,\xi_{j,k})}{N_k}\right].$
In Theorem \ref{rate}, we derive the expected solution error bound in terms of $e_k$. Then, in Corollary \ref{rate:main}, we obtain the rate and complexity statements for the Algorithm \ref{alg1}.  
\begin{algorithm}[htbp]
\caption{Inexact-VR-SQVI}
\label{alg1}
{\bf Input}: $x_0\in X$, $\eta>0$,  $\{N_k\}_k$, $\{t_k\}_k$, $\{\alpha_k\}_k$ and Algorithm $\mathcal A$ satisfying Assumption \ref{assump:inner}; \\
{\bf for $k=0,\hdots T-1$ do}\\
\mbox{(1)} Find an approximate solution of the following projection using Algorithm $\mathcal A$ \zal{ in $t_k$ iterations}
$$y_k\approx \mathbf{P}_{K(x_k)}\left[x_k-\eta\frac{\sum_{j=1}^{N_k}G(x_k,\xi_{j,k})}{N_k}\right];$$ 
\mbox{(2)} $x_{k+1}=(1-\alpha_k)x_k+\alpha_k y_k$;  \\
{\bf end for}\\
{\bf Output:} $x_{k+1}$;
\end{algorithm}

\begin{theorem}\label{rate}
Consider the iterates generated by Algorithm \ref{alg1} and suppose Assumptions \ref{asump1} and \ref{assump_error} hold. Choose $\alpha_k=\bar\alpha\in(0,1)$ and define $\beta\triangleq\gamma+\sqrt{1+L^2\eta^2-2\eta\mu}$, $q\triangleq (1-\beta)\bar\alpha$. Choose stepsize $\eta$ such that the following holds:
$$|\eta-\tfrac{\mu}{L^2}|<\tfrac{{\sqrt {\mu^2-L^2(2\gamma-\gamma^2)}}}{L^2}.$$
Let $N_k=\lceil \rho^{-2k}\rceil$ for all $k>0$ where $\rho>1-q$. Then the following holds:
\begin{align}\label{eq:rate-general}
\mathbb E\left[ \| x_T-x^{*} \|\right]& 
\leq \rho^{T} \|x_{0}-x^{*}\| +\bar{\alpha} \eta \nu \rho^{T-1} + \frac{\bar{\alpha} \eta \nu \rho^{T}}{\rho+q-1} +{\bar\alpha\sum_{k=\ze{0}}^{T-1}\left(\|e_k\|\rho^{T-1-k}\right)}.
\end{align}
\end{theorem}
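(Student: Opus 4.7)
The plan is to exploit the fixed-point characterization $x^*=\mathbf{P}_{K(x^*)}[x^*-\eta F(x^*)]$ of the SQVI solution and bound $\|y_k-x^*\|$ by comparing $y_k$ with this expression. Writing $\bar G_k\triangleq \tfrac{1}{N_k}\sum_{j=1}^{N_k}G(x_k,\xi_{j,k})=F(x_k)+\bar w_{k,N_k}$ so that $y_k=\mathbf{P}_{K(x_k)}[x_k-\eta \bar G_k]+e_k$, and inserting the intermediate point $\mathbf{P}_{K(x^*)}[x_k-\eta \bar G_k]$, the triangle inequality splits the discrepancy into three pieces: the inexactness $\|e_k\|$, a change-of-set term, and a change-of-argument term. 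The change-of-set term is bounded by $\gamma\|x_k-x^*\|$ using the set-map Lipschitz hypothesis of Proposition \ref{t1}, while the change-of-argument term is controlled by the nonexpansiveness of projection onto the common set $K(x^*)$ (Lemma \ref{lem1}(a)), leaving $\|(x_k-x^*)-\eta(F(x_k)-F(x^*))\|+\eta\|\bar w_{k,N_k}\|$.

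A standard expansion together with Assumption \ref{asump1} then gives $\|(x_k-x^*)-\eta(F(x_k)-F(x^*))\|^2\leq(1-2\eta\mu+\eta^2 L^2)\|x_k-x^*\|^2$, so that in total $\|y_k-x^*\|\leq \|e_k\|+\beta\|x_k-x^*\|+\eta\|\bar w_{k,N_k}\|$ with $\beta=\gamma+\sqrt{1+L^2\eta^2-2\eta\mu}$. The stepsize bound $|\eta-\mu/L^2|<\sqrt{\mu^2-L^2(2\gamma-\gamma^2)}/L^2$ is precisely what forces $\beta<1$, hence $q=(1-\beta)\bar\alpha\in(0,1)$. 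Substituting into $x_{k+1}-x^*=(1-\bar\alpha)(x_k-x^*)+\bar\alpha(y_k-x^*)$ and using convexity of the norm yields
\begin{align*}
\|x_{k+1}-x^*\|\leq (1-q)\|x_k-x^*\|+\bar\alpha\|e_k\|+\bar\alpha\eta\|\bar w_{k,N_k}\|.
\end{align*}
Taking expectation and applying Assumption \ref{assump_error} together with Jensen's inequality gives $\mathbb E[\|\bar w_{k,N_k}\|]\leq \nu/\sqrt{N_k}\leq \nu\rho^k$ under $N_k=\lceil \rho^{-2k}\rceil$, which reduces everything to the scalar recursion $a_{k+1}\leq (1-q)a_k+\bar\alpha\|e_k\|+\bar\alpha\eta\nu\rho^k$ with $a_k\triangleq \mathbb E[\|x_k-x^*\|]$; note that $\|e_k\|$ is deterministically bounded via Assumption \ref{assump:inner}.

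Unrolling this recursion produces
\begin{align*}
a_T \leq (1-q)^T\|x_0-x^*\|+\bar\alpha\sum_{k=0}^{T-1}(1-q)^{T-1-k}\|e_k\|+\bar\alpha\eta\nu\sum_{k=0}^{T-1}(1-q)^{T-1-k}\rho^k,
\end{align*}
and the hypothesis $\rho>1-q$ lets us replace $(1-q)^j$ by $\rho^j$ in the first two sums. For the noise sum the trick is to isolate the endpoint $k=T-1$, which contributes exactly $\bar\alpha\eta\nu\rho^{T-1}$, and apply the geometric identity $\sum_{k=0}^{T-2}(1-q)^{T-1-k}\rho^k\leq \rho^T/(\rho+q-1)$ to the remainder; adding these recovers the two noise terms in \eqref{eq:rate-general}. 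The main obstacle is precisely this bookkeeping at the interface of the two geometric scales: a naive termwise bound $(1-q)^{T-1-k}\rho^k\leq \rho^{T-1}$ would introduce a factor of $T$ and spoil the linear rate, so exploiting the strict gap $\rho>1-q$ through the exact geometric-series identity is essential to preserve the pure $O(\rho^T)$ decay.
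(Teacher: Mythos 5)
Your proposal is correct and follows essentially the same route as the paper's proof: the same splitting of $y_k-x^*$ via the intermediate point $\mathbf{P}_{K(x^*)}[x_k-\eta\bar G_k]$ (yielding the $\gamma$, nonexpansiveness, noise, and $\|e_k\|$ terms), the same bound $\sqrt{1+L^2\eta^2-2\eta\mu}\,\|x_k-x^*\|$ for the change-of-argument term, the same contraction recursion with $q=(1-\beta)\bar\alpha$, and the same unrolling with $\mathbb E[\|\bar w_{k,N_k}\|]\leq \nu\rho^k$ and the geometric-series bound under $\rho>1-q$. No substantive differences from the paper's argument.
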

\begin{proof}
Recall that $\bar w_{k,N_k}= \zal{\tfrac{1}{N_k}{\sum_{j=1}^{N_k} ( G(x_k,\xi_{j,k})-F(x_k) )}} $. Using the update rule of $x_{k+1}$ in Algorithm \ref{alg1} and the fact that $e_k$ denotes the error of computing
the projection operator, we obtain the following.  
\begin{align}\label{b1}
\nonumber&\|x_{k+1}-x^*\|\\\nonumber \quad&=\left\|(1-\alpha_k)x_k+\alpha_k \mathbf{P}_{K(x_k)}\left[x_k-\eta(F(x_k)+\bar w_{k,N_k})\right]+{\alpha_ke_k}-(1-\alpha_k)x^*-\alpha_k\mathbf{P}_{K(x^*)}\left[x^*-\eta F(x^*)\right]\right\|\\
\nonumber&\leq \|(1-\alpha_k)(x_k-x^*)\|+\alpha_k\|\mathbf{P}_{K(x_k)}\left[x_k-\eta(F(x_k)+\bar w_{k,N_k})\right]-\mathbf{P}_{K(x^*)}\left[x_k-\eta(F(x_k)+\bar w_{k,N_k})\right]\|\\
\nonumber&\quad +\alpha_k\|\mathbf{P}_{K(x^*)}\left[x_k-\eta(F(x_k)+\bar w_{k,N_k})\right]-\mathbf{P}_{K(x^*)}\left[x^*-\eta F(x^*)\right]\|+{\alpha_k\|e_k\|}\\
&\leq \|(1-\alpha_k)(x_k-x^*)\|+\alpha_k\gamma \|x_k-x^*\|+\alpha_k\underbrace{\|x_k-x^*-\eta(F(x_k)-F(x^*))\|}_{\text{term (a)}}+\alpha_k\eta\|\bar w_{k,N_k}\|+\alpha_k\|e_k\|,
\end{align}
where in the last inequality we used Lemma \ref{lem1} and Proposition \ref{t1}. Now using strong monotonicity and Lipschitz continuity, we can bound term (a) in inequality \eqref{b1}. 
\begin{align}\label{b2}
\nonumber\|x_k-x^*-\eta(F(x_k)-F(x^*))\|^2&=\|x_k-x^*\|^2+\eta^2\|F(x_k)-F(x^*)\|^2-2\eta\langle x_k-x^*,F(x_k)-F(x^*)\rangle\\
\nonumber&\leq (1+L^2\eta^2-2\eta\mu)\|x_k-x^*\|^2\\& \implies \text{term(a)}\leq \sqrt{1+L^2\eta^2-2\eta\mu}\|x_k-x^*\|.
\end{align}
Using \eqref{b2} in \eqref{b1}, defining $\beta\triangleq\gamma+\sqrt{1+L^2\eta^2-2\eta\mu}$ and $q_i\triangleq (1-\beta)\alpha_i$ we get the following:
\begin{align}\label{b3}
\nonumber\|x_{k+1}-x^*\|&\leq (1-\alpha_k)\|x_k-x^*\|+\alpha_k\left(\gamma+\sqrt{1+L^2\eta^2-2\eta\mu}\right)\|x_k-x^*\|+\alpha_k\eta\|\bar w_{k,N_k}\| \zal{+\alpha_k\|e_k\|}\\
\nonumber&=(1-(1-\beta)\alpha_k)\|x_k-x^*\|+\alpha_k\eta\|\bar w_{k,N_k}\|+\alpha_k\|e_k\|\\
\nonumber&\leq  \prod_{i=0}^k (1-q_{i})\|x_{0}-x^{*} \| + \sum_{i=0}^{k-1}\left(\left(\prod_{j=i}^{k-1}(1-q_{j+1})\right) \alpha_{i} \left(\eta \|\bar{w}_{i,N_{i}}\|+\|e_i\|\right)\right)\\
& \quad + \alpha_{k} \left(\eta \|\bar{w}_{k,N_{k}}\|+\|e_k\|\right).
\end{align} 
For any $k$, we choose $\alpha_k=\bar \alpha$, where $0<\bar\alpha<1$. Based on the conditions of the theorem, one can easily verify that $\beta<1$ and  $q_k=q<1$ for all $k\geq 0$. Now, by choosing $N_k=\lceil \rho^{-2k}\rceil$, where {$\rho\geq1-q$}, it follows from inequality \eqref{b3} and Assumption \ref{assump_error} by taking expectation from both \bmo{sides} that for any $T\geq 1$, 
\begin{align*}
\nonumber\mathbb E\left[ \| x_{T}-x^{*} \|\right] &\leq (1-q)^{T} \|x_{0}-x^{*}\| + \bar{\alpha} \eta \sum_{k=0}^{T-2}\left((1-q)^{T-1-k} \tfrac{\nu}{\rho^{-k}} \right)\\&\quad+{\bar\alpha\sum_{k=0}^{T-1}\left(\|e_k\|(1-q)^{T-1-k}\right)} + \bar{\alpha} \eta (\nu/ \rho^{-T+1}).
\end{align*}
Using the fact that $\rho \geq 1-q$, the following holds.  
\begin{align*}
\mathbb E\left[ \| x_T-x^{*} \|\right]& \leq\rho^{T} \|x_{0}-x^{*}\| +\bar{\alpha} \eta \nu \rho^{T-1} + \bar{\alpha} \eta \nu \rho^\ze{T-1} \sum_{\ze{k}=0}^{T-2} ((1-q)/\rho)^{T-1-\ze{k}}+{\bar\alpha\sum_{k=0}^{\ze{T-1}}\left(\|e_k\|\rho^{T-1-k}\right)}\\&
\leq \rho^{T} \|x_{0}-x^{*}\| +\bar{\alpha} \eta \nu \rho^{T-1} + \frac{\bar{\alpha} \eta \nu \rho^{T}}{\rho+q-1} +{\bar\alpha\sum_{\aj{k=0}}^{T-1}\left(\|e_k\|\rho^{T-1-k}\right)},
  \end{align*}
 where in the last inequality we used the fact that $\sum_{k=0}^{T-2} ((1-q)/\rho)^{T-1-k}\leq \sum_{j=1}^{T-1} (\tfrac{1-q}{\rho})^j\leq \tfrac{\rho}{\rho+q-1}$. 
 \end{proof}

 \begin{corollary}\label{rate:main}
Under the premises of Theorem \ref{rate} and selecting $t_k=\tfrac{({k+1})\log^2(k+2)}{\rho^{k}}$, where $t_k$ is the number of steps for algorithm $\mathcal A$ at each iteration $k$, then,   

{\bf (i)}  the following holds:
\begin{align*}
\mathbb E[\|x_{T}-x^*\|]\leq  \rho^{T} \|x_{0}-x^{*}\| +\bar{\alpha} \eta \nu \rho^{T-1} + \frac{\bar{\alpha} \eta \nu \rho^{T}}{\rho+q-1}+\bar\alpha CD \rho^{T-1}=\mathcal O(\rho^{T}),
\end{align*}
where $D\triangleq\sum_{\aj{k=0}}^{\aj{\infty}} \tfrac{1}{(k+1)\log^2(k+2)}\aj{\leq 3.39}$.

{\bf (ii)} to compute a solution $x_{T}$ such that  $E[\|x_{T}-x^*\|]\leq \epsilon$, the total number of sample operators is  $\sum_{k=0}^{T-1} N_k\geq \mathcal O(1/\epsilon^2)$.
\end{corollary}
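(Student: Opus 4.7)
The strategy is to start from the bound \eqref{eq:rate-general} in Theorem~\ref{rate} and convert the residual projection-error sum $\bar\alpha \sum_{k=0}^{T-1}\|e_k\|\rho^{T-1-k}$ into a single $\mathcal O(\rho^{T-1})$ term by exploiting the inner-iteration schedule $t_k=(k+1)\log^2(k+2)/\rho^k$. For part (ii), I then invert the resulting linear rate in $T$ to read off the sample complexity.

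First, I would apply Assumption~\ref{assump:inner} to the projection subproblem solved at iteration $k$: running algorithm $\mathcal A$ for $t_k$ inner iterations produces $y_k$ with $\|e_k\|\leq \sqrt{C}/t_k$. Substituting the prescribed $t_k$ yields
\[
\|e_k\|\,\rho^{T-1-k}\;\leq\;\frac{\sqrt{C}\,\rho^{k}}{(k+1)\log^{2}(k+2)}\cdot\rho^{T-1-k}\;=\;\frac{\sqrt{C}\,\rho^{T-1}}{(k+1)\log^{2}(k+2)}.
\]
The key observation is that the geometric factor $\rho^k$ coming from $t_k^{-1}$ cancels with $\rho^{-k}$ built into the step count, so only the (much milder) Bertrand-type summand survives.

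Summing over $k=0,\ldots,T-1$ and extending to the infinite tail gives
\[
\bar\alpha\sum_{k=0}^{T-1}\|e_k\|\rho^{T-1-k}\;\leq\;\bar\alpha\sqrt{C}\,\rho^{T-1}\sum_{k=0}^{\infty}\frac{1}{(k+1)\log^{2}(k+2)}\;=\;\bar\alpha\sqrt{C}\,D\,\rho^{T-1},
\]
where $D\leq 3.39$ is a routine numerical bound on this convergent series (comparison with $\int_{2}^{\infty}dx/(x\log^{2}x)$). Plugging this back into \eqref{eq:rate-general} produces the displayed bound in part~(i) (absorbing $\sqrt{C}$ into the constant $C$ of the statement); each of the four surviving terms is $\mathcal O(\rho^T)$, which is the claimed linear rate.

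For part (ii), I would invert the rate from (i): to force $\mathbb E[\|x_T-x^*\|]\leq \epsilon$ it suffices to pick $T=\Theta(\log(1/\epsilon)/\log(1/\rho))$, so that $\rho^T=\Theta(\epsilon)$. Since $N_k=\lceil \rho^{-2k}\rceil$, the total sampling cost is then
\[
\sum_{k=0}^{T-1}N_k\;\leq\;\sum_{k=0}^{T-1}\bigl(\rho^{-2k}+1\bigr)\;\leq\;\frac{\rho^{-2T}}{1-\rho^{2}}+T\;=\;\mathcal O(\rho^{-2T})\;=\;\mathcal O(1/\epsilon^{2}).
\]
The only nontrivial point is the sum in Step two: verifying that the chosen $t_k$-schedule is exactly calibrated so that the geometric $\rho^{T-1-k}$ factor from Theorem~\ref{rate} is neutralized and only a convergent logarithmic tail remains. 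Everything else is routine manipulation of geometric sums and a one-line inversion of $\rho^T \le \epsilon$.
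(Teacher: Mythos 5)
Your proof of part (i) is correct and essentially identical to the paper's: bound $\|e_k\|$ via Assumption \ref{assump:inner} with the prescribed $t_k$ so that the $\rho^k$ factor cancels the $\rho^{T-1-k}$ weight, sum the convergent series $D=\sum_{k\ge 0}\tfrac{1}{(k+1)\log^2(k+2)}\le 3.39$, and substitute into \eqref{eq:rate-general}; your use of $\sqrt{C}/t_k$ (absorbing $\sqrt{C}$ into the constant) is in fact slightly more careful than the paper, which writes $C/t_k$ directly. For part (ii) the computation is the same geometric sum of $N_k=\lceil\rho^{-2k}\rceil$ with $T\approx\log_{1/\rho}(\bar D/\epsilon)$, but you bound the total sample count from above by $\mathcal O(1/\epsilon^2)$ (the standard complexity statement), whereas the paper, matching the literal ``$\geq$'' in the corollary, derives the lower bound $\sum_{k=0}^{T-1}N_k\ge\tfrac{\rho^2}{1-\rho^2}\bigl(\tfrac{\bar D^2}{\epsilon^2}-1\bigr)$ on the required budget; both directions follow from the same two-line calculation, so this is a difference of phrasing rather than of substance.
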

 \begin{proof}
{\bf (i)} Recall that $e_k$ represents the error of computing the projection at iteration $k$. 
According to the assumption \ref{assump:inner}, Algorithm $\mathcal A$ has a convergence rate of $C/t_k^2$ within $t_k$ inner steps. By selecting $t_k=\tfrac{({k+1})\log^2(k+2)}{\rho^{k}}$ we conclude that $\|{e_k}\|\leq \tfrac{C}{t_k}=\tfrac{C\rho^{k}}{(k+1)\log^2(k+2)}$. 
Therefore, the following holds: $$\bar\alpha\sum_{\aj{k=0}}^{T-1}\left(\|e_k\|\rho^{T-1-k}\right)\leq \bar\alpha C\rho^{T-1}\sum_{\aj{k=0}}^{T-1} \tfrac{1}{(k+1)\log^2(k+2)}\leq \bar\alpha CD\rho^{T-1},$$ 
 where we let $D=\sum_{\aj{k=0}}^{\aj{\infty}} \tfrac{1}{(k+1)\log^2(k+2)}\aj{\leq 3.39}$. Therefore, we obtain
\begin{align}\label{rate_proof}
\mathbb E[\|x_{T}-x^*\|]\leq  \rho^{T} \|x_{0}-x^{*}\| +\bar{\alpha} \eta \nu \rho^{T-1} + \frac{\bar{\alpha} \eta \nu \rho^{T}}{\rho+q-1}+\bar\alpha CD \rho^{T-1}=\mathcal O(\rho^{T}).
\end{align}
{\bf (ii)} To compute an $\epsilon$-solution, i.e., $E[\|x_{T}-x^*\|]\leq \epsilon$, it follows from \eqref{rate_proof} that $T\geq \log_{1/\rho}(\bar D/\epsilon)$ iterations is required, where $\bar D=\|x_0-x^*\|+\bar\alpha\eta\nu\rho^{-1}+\tfrac{\bar\alpha\eta\nu}{\rho+q-1}+\bar\alpha CD\rho^{-1}$. Hence, we obtain $$\sum_{k=0}^{T-1} N_k\geq \frac{\rho^2}{1-\rho^2}\left(\frac{\bar D^2}{\epsilon^2}-1\right).$$ 
\end{proof}
\begin{remark}[{\bf Total number of inner iterations}]
In Algorithm \ref{alg1}, each iteration requires taking $t_k=\tfrac{({k+1})\log^2(k+2)}{\rho^{k}}$ inner steps of Algorithm $\mathcal A$. Therefore, the total number of inner iterations is
$$\sum_{k=0}^{T-1} t_k=\sum_{k=0}^{T-1} \tfrac{({k+1})\log^2(k+2)}{\rho^{k}}\leq {T}\log^2(T+1)\tfrac{(1/\rho)^{T}}{1/\rho-1}.$$
To achieve an $\epsilon$-solution, we have $T=\log_{1/\rho}{\bar D/\epsilon}$, hence one can obtain 
$\sum_{k=0}^{T-1}t_k\leq \mathcal O({1\over \epsilon}\log(1/\epsilon))$.
\end{remark}
\begin{remark}[{\bf Exact-VR-SQVI}]
The solution error bound obtained in \eqref{eq:rate-general} represents a general convergence rate in terms of the error of the projection operator. The decay of this error governs the convergence rate of the algorithm. In particular, in Corollary \ref{rate:main} we characterized the rate of decay of this error to guarantee a linear convergence rate. In other extreme, 
when the projection onto the constraint set is easy to compute, i.e., $\|e_k\|=0$ for all $k\geq0$. Then under the premises of Theorem \ref{rate}, a linear convergence rate for Algorithm \ref{alg1} can be obtained. In particular, the following bound for the expected solution error holds:
\begin{align*}\mathbb E\left[ \| x_T-x^{*} \|\right]& 
\leq \rho^{T} \|x_{0}-x^{*}\| +\bar{\alpha} \eta \nu \rho^{T-1} + \frac{\bar{\alpha} \eta \nu \rho^{T}}{\rho+q-1}.
  \end{align*}
\end{remark}

\subsection{Instances of the Inner Algorithm $\mathcal A$}\label{sec:inner}
As discussed in section \ref{sec:conv}, when the projection onto the constraint set $K(x)$ is not easy to compute, one needs to use an approximation of such operator. Indeed, such an approximation can be obtained via implementing Algorithm $\mathcal A$ with a progressive accuracy at each iteration. 

Here we consider a general class of convex constraint set comprises of convex functional constraints. In particular, we assume that $K(x)=\{y\in X\mid g_i(x,y)\leq 0,~i\in\{1,\hdots,m\}\}$, where $g_i(x,\cdot):X\to \mathbb R$ is convex for any $x\in X$ and $i\in\{1,\hdots,m\}$. Therefore, at each iteration of Algorithm \ref{alg1} one needs to compute the projection operator inexactly which is of the following form:
\begin{align}\label{co}
&\min_{u\in K(x)} \quad {1\over 2}\left\|u- x\right\|^2, 
\end{align}
for some given $x\in\mathbb{R}^n$. Problem \eqref{co} has a strongly convex objective function with nonlinear convex constraints, and there has been a variety of methods developed in the optimization literature to solve such a problem. One of the efficient class of methods for solving large-scale convex constrained optimization problem with strongly convex objective  {that satisfies Assumption \ref{assump:inner}} is the first-order primal-dual scheme guaranteeing a convergence rate of $\mathcal O(1/t^2)$, where $t$ denotes the number of iterations, in terms of suboptimality and infeasibility, e.g., \cite{he2015mirror,malitsky2018proximal} and \cite{hamedani2018primal}. 

For instance, Accelerated Primal-Dual with Backtracking (APDB) method introduced by \cite{hamedani2018primal} with an initial point $ u_0$ and output $u$ has a convergence rate of $\mathcal O(1/t^2)$ within $t$ steps, where $\|u-\tilde u\|^2\leq (a_1\| u_0-\tilde u\|^2+a_2)/t^2$ for some $a_1,a_2>0$, hence, satisfying the condition of Assumption \ref{assump:inner}. In the next section, we use APDB as an instance of Algorithm $\mathcal A$ for solving the projection operator inexactly for various numerical experiments.  

\section{NUMERICAL EXPERIMENTS}\label{sec:numerical}

In this section, we consider two blood donation examples inspired, in part, by the American Red Cross (cf. \cite{webs1}). We concentrate on Tucson, Arizona, where the American Red Cross and United Blood Services compete with each other. {The experiments are performed on Matlab (2021) on a 64-bit Windows 11 with Intel i5-1135G7 @2.4GHz with 8GB RAM. 
Inspired by the problems considered in \cite{nagurney2019competition}, we consider two settings for the blood donation problem. In each example, we implemented two variants of the VR-SQVI algorithm; exact-VR-SQVI and inexact-VR-SQVI. For inexact-VR-SQVI, we used APDB \cite{hamedani2018primal} as an instance of Algorithm $\mathcal A$. In exact-VR-SQVI, to solve projection subproblem, we use commercial optimization solver MOSEK through CVX \cite{grant2014cvx}. We then demonstrate the advantage of inexact approach in terms of the running time of the algorithm when constraint set is not easy to project.} 

   \begin{wrapfigure}{r}{0.23\textwidth}
	\vspace{-0.1in}
	\hspace{-0.5in}
  \begin{center}
\includegraphics[width = 3.5 cm]{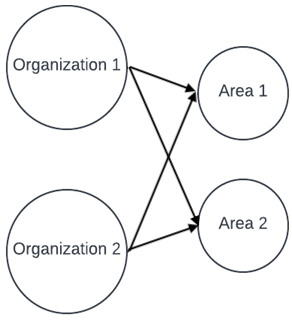}
    \caption{The network structure.}
    \label{f2}
    \end{center}
\end{wrapfigure}

  \textbf{Example 1.} { In this example, we consider two blood service organizations: the American Red Cross and United Blood Services. These organizations correspond to organizations 1 and 2 in Figure \ref{f2}. Formed in 1943, United Blood Services is a nonprofit company based in Arizona that offers blood and services to over 500 hospitals in 18 states. In this example, both United Blood Services and the American Red Cross have stationary areas to donate blood in Tucson. These are represented by the two area nodes in Figure \ref{f2}. 
  
  Consider the blood donation problem defined in Section \ref{sec:blood} where the utility associated with the blood service organization $i$ is denoted by $\omega_i\sum_{j=1}^m\gamma_{ij} Q_{ij}$, where $\omega_i$ and $\gamma_{ij}$ are positive numbers. The cost associated with collecting blood in location $j$ by organization $i$ is denoted by $c_{ij}(Q)$. Moreover, $P_{ij}(Q)$ represents a volume of blood donations in location $j$ by organization $i$ and we associate an average price $\pi_i$ for blood service organization $i$. 
Therefore, the transaction utility $U_i$ can be defined as  $$U_i\triangleq \pi_i\sum_{j=1}^m P_{ij}(Q)+\omega_i\sum_{j=1}^m\gamma_{ij} Q_{ij}-\sum_{j=1}^m c_{ij}(Q).$$}
  

The American Red Cross has a baseline of 130 and 135 repeat donors. United Blood Services has lower baseline populations of 123 and 135. These monthly values are represented in the following four transaction utility functions. 
The volume of blood donations in both locations for the American Red Cross are:
\begin{align*}
& P_{11} (Q ) = 10Q_{11} - Q_{21} - Q_{22} + 130 
\\&P_{12} (Q ) = 12Q_{12} - Q_{21} - 2Q_{22} + 135 .
\end{align*}
 
The volume of blood donations in both locations for the United Blood Services
are:
 \begin{align*}
&P_{21} (Q ) = 11Q_{21} - Q_{11} - Q_{12} + 123 
\\&P_{22} (Q ) = 12Q_{22} - Q_{11} - Q_{12} + 135 .
 \end{align*}
 

The utility function components of the transaction utilities of
these blood service organizations are:
\begin{align*}
    &\omega_{1} = 9, \quad \gamma_{11} = 8, \quad \gamma_{12} = 9.\\
    &\omega_{2} = 10,\quad \gamma_{21} = 9, \quad \gamma_{22} = 10.
\end{align*}

{In this example, blood collection sites must pay for employees, supplies, energy, and providing the level of quality service. The uncertainty of the total mentioned operating costs over time are represented in the following functions:}    
\begin{align*}
{c_{11} (Q,\xi ) = (5+\xi)Q_{11}^2 + 10000},\quad c_{12} (Q,\xi ) = (18+\xi)Q_{12}^2 + 12000,\\
c_{21} (Q,\xi ) = (4.5+\xi)Q_{21}^2 + 12000,\quad c_{22} (Q,\xi ) = (5+\xi)Q_{22}^2 + 14000,
\end{align*}
where $\xi$ has an i.i.d. standard normal distribution.
The lower and upper bounds on the quality levels are considered as follows:
\begin{align*}
\barbelow Q_{11} = 50,  \overline Q_{11} = 80,\quad \barbelow Q_{12} = 40,  \overline Q_{12} = 70,\\
\barbelow Q_{21} = 60, \overline Q_{21} = 90,\quad \barbelow Q_{22} = 70,  \overline Q_{22} = 90.
\end{align*}
The prices, which correspond to the collection component of
the blood supply chain, are
$\pi_1 = 70,\quad \pi_2 = 60.$
We consider the minimum volume of blood required in location 1  and 2 are $P_{1} = 1200$ and  $P_{2} = 1100$, respectively. Hence, the blood volume requirement is depicted by the following constraints:
\begin{align*}
& 9 Q_{11} + 10 Q_{21} - Q_{22} - Q_{12} + 253 \geq 1200, \\
& 11Q_{12} - Q_{21} +10 Q_{22} - Q_{11} + 270 \geq 1100. 
\end{align*}

{By implementing Inexact-VR-SQVI algorithm, we obtain the following solutions}
\begin{align*}
    Q_{11}^* = 72.81 ,\quad Q_{12}^* = 40.00,\quad Q_{21}^* = 78.09,\quad Q_{22}^* = 77.59,  
\end{align*}
and optimal value for each organization with the following values: $U_1(Q^*) = 7065$ and  $U_2(Q^*) = 40589 $. 

Figure \ref{plotex1} illustrates the progress of Inexact-VR-SQVI in terms of the relative suboptimality for the utility function of each organization versus the running time. 
From Table \ref{Ex1Table}, one can observe that Exact-VR-SQVI algorithm takes 97 seconds for a simple 2-dimensional example while the Inexact-VR-SQVI algorithm can obtain an approximated solution in 0.5 \bmo{seconds} with \bmo{a} relative accuracy of less than $10^{-6}$.
 \begin{figure}[htb]
    \centering
    \includegraphics[width = 10 cm]{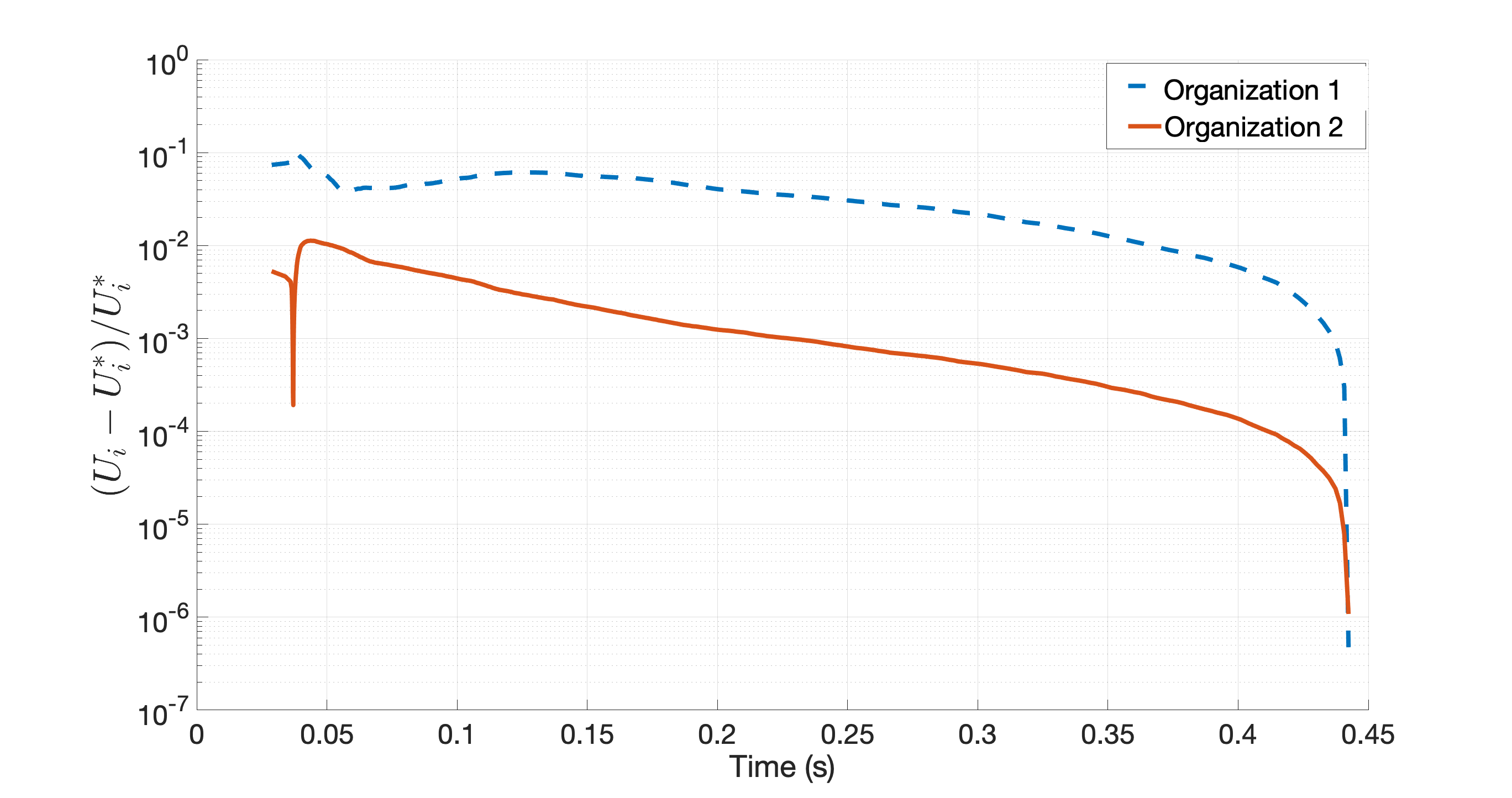}
    \caption{Relative sub-optimality error of Inexact-VR-SQVI versus time for Example 1. }
    \label{plotex1}
\end{figure}

 \begin{table}[htb]
	\centering
	\caption{The utility and CPU-time for exact and inexact-VR-SQVI for Example 1. }
\begin{tabular}{|c|c|c|}
\hline
  Methods      & Utility  & CPU-Time(s) \\ \hline
 Exact-VR-SQVI & {$U_1=0.7065e+4, \ U_2=4.0589e+4 $} & 97.64  \\ \hline
 Inexact-VR-SQVI  & $U_1=0.7065e+4, \ U_2=4.0589e+4$ & 0.50  \\ \hline
\end{tabular}
\label{Ex1Table}
\end{table}

 \textbf{Example 2.} This example includes the same network topology as in Example 1, that is, the one depicted in Figure \ref{f2}. The problem's parameters are selected as those in Example 1 except $P_{ij}(\cdot)$'s functions. 
In particular, in this example we consider 
\begin{align*}
   & P_{11}(Q)=50 \sqrt{10Q_{11} - Q_{21} - Q_{22} + 130}, \quad P_{12}(Q)=30 \sqrt{12Q_{12} - Q_{21} - 2Q_{22} + 135},\\
&    P_{21}(Q)=40 \sqrt{11Q_{21} - Q_{11} - Q_{12} + 123 }, \quad P_{22}(Q)=20 \sqrt{12Q_{22} - Q_{11} - Q_{12} + 135}.
\end{align*}

Therefore, the constraints related to \bmo{the} minimum requirement for blood collection in this example are: 
\begin{equation}
  \begin{aligned}\label{const_nonlinear}
  &50 \sqrt{10Q_{11} - Q_{21} - Q_{22} + 130} + 40 \sqrt{11Q_{21} - Q_{11} - Q_{12} + 123 } \geq 1200,\\
  &30 \sqrt{12Q_{12} - Q_{21} - 2Q_{22} + 135} + 20 \sqrt{12Q_{22} - Q_{11} - Q_{12} + 135} \geq 1100.
 \end{aligned}
 \end{equation}

Implementing Inexact-VR-SQVI on this example leads to the following solutions:
 \begin{align*}
    Q_{11}^* = 69.72,\quad Q_{12}^* = 40.00,\quad Q_{21}^* = 61.89,\quad Q_{22}^* = 70.00,
\end{align*}
and   $U_1(Q^*) = 3234401$ and  $U_2(Q^*) =264865 $.


It is worth noting that since the constraints set in this example is described by nonlinear functional constraints in \eqref{const_nonlinear}, the projection operator requires a more computational time as it is reflected in the running time of Exact-VR-SQVI--see Table \ref{Ex2table}. However, the Inexact-VR-SQVI has a lower per iteration complexity leading to  a far less computational time to achieve an accuracy of less than $10^{-6}$ as shown in Figure \ref{plotex2}. 

\begin{table}[htb]
\centering
    \caption{The utility and CPU-time for exact and inexact-VR-SQVI for Example 2. }
\begin{tabular}{|c|c|c|}
    \hline
     Methods      & Utility  & CPU-Time(s) \\ \hline
     Exact-VR-SQVI &   $U_1=3.234402e+6, \ U_2=2.648863e+6$ & 213  \\ \hline
     Inexact-VR-SQVI &  $U_1=3.234401e+6,\ U_2=2.648865e+6$  & 0.72  \\ \hline
    \end{tabular}
    \label{Ex2table}
\end{table}
\begin{figure}[htb]
    \centering
    \includegraphics[width = 9.5 cm]{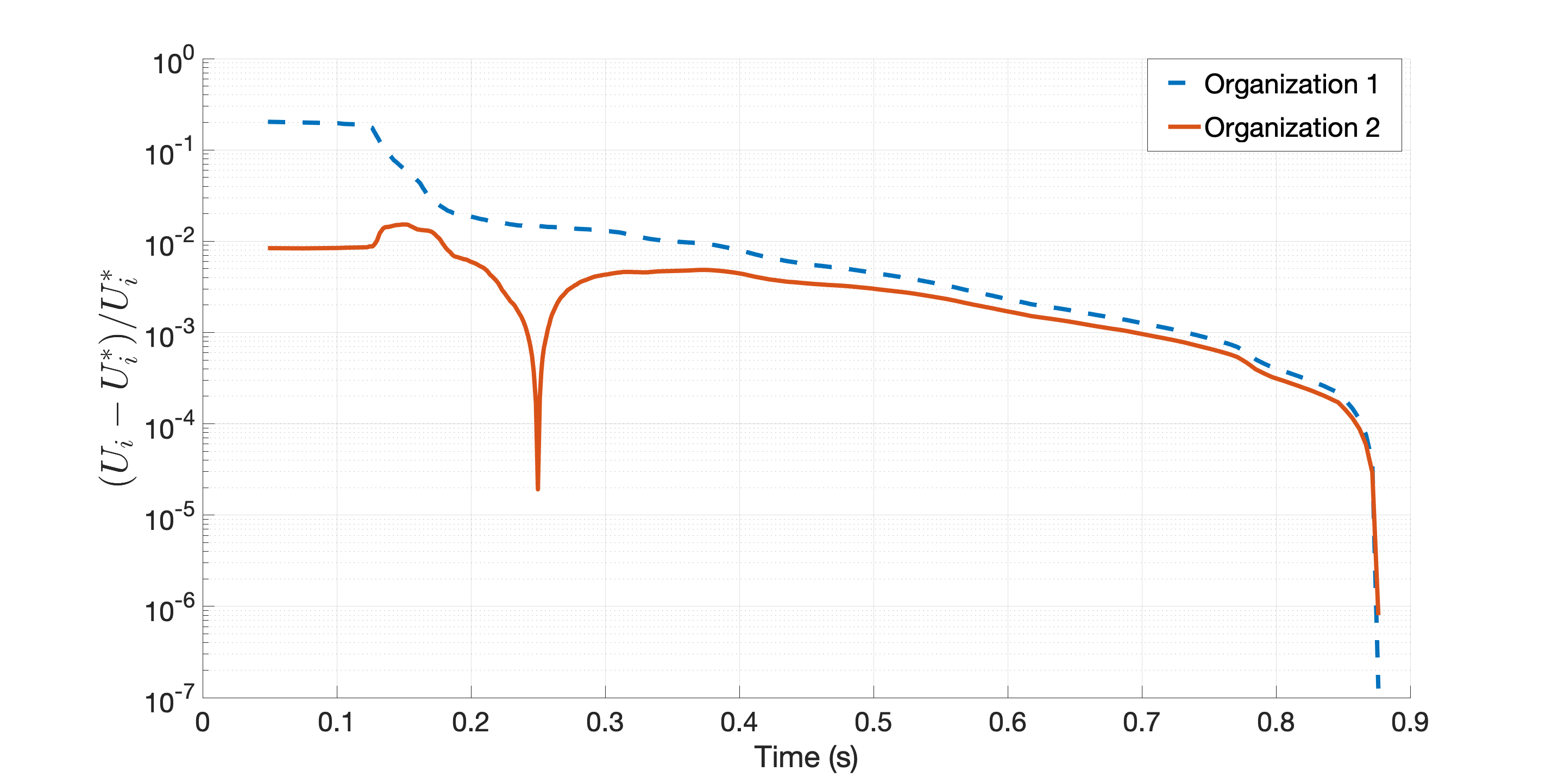}
    \caption{Relative sub-optimality error of Inexact-VR-SQVI versus time for Example 2. }
    \label{plotex2}
\end{figure}

\section{CONCLUDING REMARKS}\label{sec:conclude}
In this paper, we concentrate our efforts on the strongly-monotone stochastic quasi-variational inequality problems. An inexact variance reduced scheme 
{was developed}; moreover, the convergence rate and the oracle complexity of the proposed method are characterized. We believe that the proposed method represents  the first inexact scheme with a convergence guarantee to solve SQVI problems when the constraints are not easy to project. Additionally, we demonstrated the effectiveness and robustness of the proposed inexact method for solving blood donation problems in the numerical experiments. The results obtained in this paper are a crucial first step in examining more general cases. Future directions include investigating monotone and weakly-monotone SQVI problems with applications in various domains such as power systems and information security.

\bibliographystyle{siam}
\bibliography{biblio.bib}

\end{document}